\newtheorem{theorem}{Theorem}[section]
\theoremstyle{definition}
\newtheorem{definition}[theorem]{Definition}
\newtheorem{claim}{Claim}
\theoremstyle{remark}
\newcommand{\mcc}{{\raise 0.45ex \hbox{c}}}
\numberwithin{equation}{section}
\newcommand{\D}{\mathbb{D}}
\newcommand{\C}{\mathbb{C}}
\newcommand{\T}{\mathbb{T}}
\newcommand{\al}{\alpha}
\newcommand{\ip}[2]{\langle #1, #2 \rangle}
\newcommand{\mcH}{\mathcal{H}}
\newcommand{\mcB}{\mathcal{B}}
\title[Rational inner Agler class functions]{Rational inner functions in the Schur-Agler class of the polydisk}
\author{Greg Knese}
\address{University of California, Irvine, Irvine, CA 92697-3875}
\date{\today}
\email{gknese@uci.edu}
\keywords{Schur-Agler class, polydisk, polydisc, reproducing kernel,
  Agler decomposition, transfer function}
\thanks{This research was supported by NSF grant DMS-1001791}
\subjclass{Primary 47A57; Secondary 42B05}
\begin{document}
\bibliographystyle{apalike}
\maketitle

\begin{abstract}
Every two variable rational inner function on the bidisk has a special
representation called a transfer function realization.  It is well
known and related to important ideas in operator theory that this does
not extend to three or more variables on the polydisk.  We study the
class of rational inner functions on the polydisk which \emph{do}
possess a transfer function realization (the Schur-Agler class) and
investigate minimality in their representations.  Schur-Agler class
rational inner functions in three or more variables cannot be
represented in a way that is \emph{as} minimal as two variables might
suggest.
\end{abstract}

\section{Prologue}
Let $\D, \T, \D^n, \T^n$ denote the unit disk in $\C$, the unit
circle, the $n$-polydisk (or just polydisk), and the $n$-torus (or
just torus), respectively.
 
A rational inner function $f$ on the polydisk $\D^n$ is a rational
function:
\[
f = q/p \quad q, p \in \C[z_1,\dots, z_n]
\]
where $p$ has no zeros on $\D^n$, and an inner function:
\[
|f| = 1 \text{ a.e. on } \T^n.
\]
In one variable, rational inner functions are just the Blaschke
products:
\[
f(z) = \mu \prod_{j=1}^{N} \frac{z-a_j}{1-\bar{a_j}z} \quad a_j \in \D, \mu \in \partial \D.
\]
Rational inner functions on the polydisk, while not as powerful a tool
as Blaschke products, are still important because (1) they are dense
in the topology of local uniform convergence inside the set of
holomorphic functions on $\D^n$ with supremum norm at most one, and
(2) they are closely related to the study of stable polynomials,
polynomials whose roots do not intersect the polydisk.  It is our aim
to study a special class of rational inner functions, called the
Schur-Agler class rational inner functions, whose definition warrants
motivation.

Using matrices, Blaschke products can be represented in a way that
appears analogous to a linear fractional transformation.  For example,
\[
\frac{z^2 - 1/4}{1-(1/4)z^2} = A + zB(I-zD)^{-1}C
\]
where $A,B,C,D$ are the block entries of a $3\times 3$ unitary:
\[
U = \begin{matrix} & \begin{matrix} \C & \C^2 \end{matrix} \\
\begin{matrix} \C \\ \C^2 \end{matrix} & \begin{bmatrix} A & B \\ C &
  D \end{bmatrix} \end{matrix} = 
\begin{bmatrix} -1/4 & 0 & \sqrt{15}/4 \\
\sqrt{15}/4 & 0 & 1/4 \\ 
0 & 1 & 0 \end{bmatrix}
\]
The notation is supposed to indicate $A = -1/4, B = [0, \sqrt{15}/4],
C = [\sqrt{15}/4, 0]^{T}$, and $D = \begin{bmatrix} 0 & 1/4 \\ 1 &
  0 \end{bmatrix}$.  This type of representation is called a transfer
function realization (a term from engineering).

Two variable rational inner functions can be represented in a similar
way.  Take for example
\[
f(z_1,z_2) = \frac{2z_1z_2 - z_1 - z_2}{2 - z_1 - z_2}.
\]
If we let $U$ be the unitary matrix
\[
U = \begin{matrix} & \begin{matrix} \C & \C^2 \end{matrix} \\
\begin{matrix} \C \\ \C^2 \end{matrix} & \begin{bmatrix} A & B \\ C &
  D \end{bmatrix} \end{matrix} = 
\begin{bmatrix} 0 & \sqrt{2}/2 & \sqrt{2}/2 \\
\sqrt{2}/2 & 1/2 & -1/2 \\
\sqrt{2}/2 & -1/2 & 1/2 \end{bmatrix}
\]
and let
\[
E(z_1,z_2) = \begin{bmatrix} z_1 & 0 \\ 0 & z_2 \end{bmatrix}
\]
then writing $z = (z_1,z_2)$, it turns out
\begin{equation} \label{2varexample}
f(z) = A + B E(z) (I-DE(z))^{-1}C.
\end{equation}

Surprisingly, not all three variable rational inner functions have a
unitary transfer function realization.  This is known and related to
important ideas in operator theory.  What is also surprising---and one
of the main points of this article---is that even if a three variable
rational inner function has a transfer function realization, it cannot
always be represented in a way that is minimal for its degree.
Something we intend to show, is that the following rational inner
function
\[
g(z) = \frac{3z_1z_2z_3 - z_1z_2 - z_1z_3 - z_2 z_3}{3 - z_1 - z_2 -z_3}
\]
can be represented in the form
\[
g(z) = A + BE(z)(I-DE(z))^{-1}C
\]
where 
\[
U = \begin{matrix} & \begin{matrix} \C & \C^N \end{matrix} \\
\begin{matrix} \C \\ \C^N \end{matrix} & \begin{bmatrix} A & B \\ C &
  D \end{bmatrix} \end{matrix}
\]
is a block unitary matrix and $E(z)$ is an $N \times N$ diagonal
matrix with $z_1,z_2,z_3$ on the diagonal (in some combination).
Naively extrapolating from the previous two variable example, one
might expect that $N$ could be chosen to equal $N=3$.  This is not the
case. Instead, we show that $6 \leq N \leq 9$.

We now introduce the rest of the paper in a more general framework.

\section{Introduction}

Let us introduce three properties.

\begin{definition}
If $f :\D^n \to \D$ is holomorphic, then we say $f$ is \emph{satisfies
  the von Neumann inequality} or $f$ is in the \emph{Schur-Agler
  class} if
\[
||f(T)|| \leq 1
\]
for all commuting $n$-tuples of strict contractions $T=(T_1,\dots,
  T_n)$.  
\end{definition}

\begin{definition}
A function $f: \D^n \to \D$ possesses an \emph{Agler decomposition} if
there exist positive semi-definite kernels $K_j:\D^n \times \D^n \to
\C$, $j=1,\dots, n$, such that
\[
1-f(z)\overline{f(\zeta)} = \sum_{j=1}^{n} (1-z_j\bar{\zeta_j})
K_j(z,\zeta)
\]
\end{definition}

Recall that a function $K(z,\zeta)$ is positive semi-definite if for
every finite set $F$ the matrix
\[
(K(z,\zeta))_{z,\zeta \in F}
\]
is positive semi-definite.  (We would need an ordering to form an
actual matrix, but this is unimportant.)  For more information on
positive semi-definite kernels, refer to
\cite{AM02} Section 2.7.

\begin{definition} \label{def:trans} 
A function $f:\D^n \to \D$ has a \emph{transfer function realization}
if there is a Hilbert space decomposed into $n$ orthogonal summands
\[
\mcH = \mcH_1 \oplus \mcH_2 \oplus \dots \oplus \mcH_n
\]
and an isometric operator $V$ in
  $\mcB (\C \oplus \mcH)$ which we write as
\[
V = \begin{bmatrix} A & B \\ C & D \end{bmatrix}
\]
where $A \in \mcB (\C, \C), B \in \mcB (\mcH, \C), C \in \mcB (\C, \mcH),
D \in \mcB (\mcH, \mcH)$, such
that
\begin{equation} \label{realized}
f(z) = A + B E(z) (I - D E(z))^{-1} C
\end{equation}
where $E(z)$ is the diagonal matrix with block diagonal entries $z_1
I_{\mcH_1}$, $z_2 I_{\mcH_2}$, $\dots$, $z_n I_{\mcH_n}$.

When the Hilbert spaces are finite dimensional, we shall refer to the
size of the realization as 
\[
\sum_{j=1}^{n} \dim \mcH_j.
\]
\end{definition}

Note $\mcB(\mcH, \mathcal{K})$ represents the set of bounded linear
operators from Hilbert space $\mcH$ to Hilbert space $\mathcal{K}$.
Also, $\mcB(\mcH) := \mcB(\mcH, \mcH)$.

The connection between operator inequalities, positive semi-definite
decompositions, and realizations was made by J. Agler.

\begin{theorem}[\cite{jA88}] \label{aglerthm} 
Let $f: \D^n \to \D$ be holomorphic.  The following are equivalent:
\begin{enumerate}
\item $f$ satisfies a von Neumann inequality 
\item $f$ has an Agler decomposition
\item $f$ has a transfer function realization
\end{enumerate}
In particular, all three conditions are automatically true exactly
when $n=1$ or $2$ because of the following theorems.
\end{theorem}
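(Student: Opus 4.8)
The plan is to prove the equivalence of $(1)$, $(2)$, $(3)$ by the cycle $(3)\Rightarrow(1)\Rightarrow(2)\Rightarrow(3)$. Two of these implications, $(3)\Rightarrow(1)$ and $(2)\Rightarrow(3)$, are essentially formal---a resolvent identity and a ``lurking isometry'' argument, respectively---while $(1)\Rightarrow(2)$ carries all the weight and is where I expect every difficulty to lie: it proceeds by separating $1-f(z)\overline{f(\zeta)}$ from a cone of kernels, running a GNS-type construction to manufacture commuting contractions, and then performing a limiting step to upgrade those contractions to \emph{strict} contractions. Once the equivalence is in hand, the final assertion is immediate: for $n=1$ and $n=2$ condition $(1)$ holds for \emph{every} holomorphic $f:\D^n\to\D$ by, respectively, the von Neumann inequality and Ando's inequality (``the following theorems''), whereas for $n\ge 3$ condition $(1)$ can fail, by classical examples, so ``exactly'' is justified.

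For $(3)\Rightarrow(1)$: given \eqref{realized} with $V=\left[\begin{smallmatrix}A&B\\C&D\end{smallmatrix}\right]$ an isometry and commuting strict contractions $T=(T_1,\dots,T_n)$, form $\mathbf E=\bigoplus_j T_j\otimes I_{\mcH_j}$; since $\|\mathbf E\|<1$ the operator $I-(D\otimes I)\mathbf E$ is invertible, and a routine manipulation using $V^*V=I$ rewrites $I-f(T)^*f(T)$ as $R^*(I-\mathbf E^*\mathbf E)R$ with $R$ built from $C\otimes I$ and the resolvent, so $\|f(T)\|\le 1$. For $(2)\Rightarrow(3)$: factor each $K_j$ in the Agler decomposition by a Kolmogorov decomposition, $K_j(z,\zeta)=\ip{u_j(z)}{u_j(\zeta)}_{\mcH_j}$, put $\mcH=\bigoplus_j\mcH_j$, $u=\bigoplus_j u_j$, and rearrange the decomposition into
\[
1+\ip{E(z)u(z)}{E(\zeta)u(\zeta)}_{\mcH}=f(z)\overline{f(\zeta)}+\ip{u(z)}{u(\zeta)}_{\mcH},
\]
which states that the vectors $(1,\,E(z)u(z))$ and $(f(z),\,u(z))$ in $\C\oplus\mcH$ have identical Gram matrices. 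Hence the map $(1,E(z)u(z))\mapsto(f(z),u(z))$ extends (enlarging $\mcH$ if needed) to an isometry $V=\left[\begin{smallmatrix}A&B\\C&D\end{smallmatrix}\right]$ of $\C\oplus\mcH$; reading off coordinates gives $f(z)=A+BE(z)u(z)$ and $u(z)=C+DE(z)u(z)$, and solving the second equation for $u(z)=(I-DE(z))^{-1}C$ yields \eqref{realized}.

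For $(1)\Rightarrow(2)$ I would first reduce to finite sets. Fix a finite $F\subset\D^n$ and let $\mathcal C$ denote the cone of Hermitian functions on $F\times F$ of the form $\sum_{j=1}^n(1-z_j\bar\zeta_j)K_j(z,\zeta)$ with each $K_j$ positive semi-definite. Suppose toward a contradiction that $1-f(z)\overline{f(\zeta)}\notin\mathcal C$. Two facts about $\mathcal C$ are needed. First, $\mathcal C$ is closed: if $\sum_j(1-z_j\bar\zeta_j)K_j^{(k)}$ converges then, evaluating on the diagonal where every summand is nonnegative, each $K_j^{(k)}(z,z)$ stays bounded, so (positivity) the $K_j^{(k)}$ are bounded and a convergent subsequence can be extracted. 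Second, $\mathcal C$ contains every rank-one kernel $\eta(z)\overline{\eta(\zeta)}$: since $(1-z_j\bar\zeta_j)^{-1}$ is the Szeg\H{o} kernel in the $j$th variable, $(1-z_j\bar\zeta_j)^{-1}\eta(z)\overline{\eta(\zeta)}$ is positive semi-definite by the Schur product theorem, and averaging gives $\sum_j(1-z_j\bar\zeta_j)\cdot\tfrac1n(1-z_j\bar\zeta_j)^{-1}\eta(z)\overline{\eta(\zeta)}=\eta(z)\overline{\eta(\zeta)}$. By Hahn--Banach there is an $\mathbb R$-linear functional $L$ on the (finite-dimensional) space of Hermitian functions on $F\times F$ with $L\ge 0$ on $\mathcal C$ and $L(1-f\bar f)<0$; in particular $L(\eta\bar\eta)\ge 0$ for every $\eta\in\C^F$ and $L((1-z_j\bar\zeta_j)\eta\bar\eta)\ge0$ for every $j$.

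Then $\ip{\eta}{\xi}:=L(\eta(z)\overline{\xi(\zeta)})$ is a semi-inner product on $\C^F$; pass to the quotient by its null space and complete to a finite-dimensional Hilbert space $\mcH_0$, and let $M_j\in\mcB(\mcH_0)$ be multiplication by the $j$th coordinate. The identity $\ip{\eta}{\eta}-\ip{M_j\eta}{M_j\eta}=L((1-z_j\bar\zeta_j)\eta\bar\eta)\ge0$ shows each $M_j$ is a contraction, the $M_j$ commute, and $f(M_1,\dots,M_n)$ is multiplication by $f|_F$ (legitimate since $p$ has no zeros on $F\subset\D^n$). Writing $\mathbf 1$ for the class of the constant function $1$, one computes $\|\mathbf 1\|^2-\|f(M)\mathbf 1\|^2=L(1-f(z)\overline{f(\zeta)})<0$, so $\|f(M)\mathbf 1\|>\|\mathbf 1\|$. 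The single remaining obstacle---and the point I expect to require the most care---is that the $M_j$ are contractions but perhaps not \emph{strict} contractions. To finish, replace $M_j$ by $rM_j$ for $r<1$: these are commuting strict contractions, so $\|f(rM)\mathbf 1\|\le\|\mathbf 1\|$ by $(1)$, and since $f(rM)$ is multiplication by $z\mapsto f(rz)$ and $L$ is continuous on a finite-dimensional space, $\|f(rM)\mathbf 1\|\to\|f(M)\mathbf 1\|$ as $r\to1^-$, giving $\|f(M)\mathbf 1\|\le\|\mathbf 1\|$, a contradiction. Hence $1-f\bar f\in\mathcal C$ over every finite $F$; since each finite-set decomposition automatically obeys the uniform bound $K_j^F(z,z)\le(1-|z_j|^2)^{-1}$, a Tychonoff/finite-intersection-property argument over all finite $F\subset\D^n$ assembles positive semi-definite kernels $K_j$ on $\D^n\times\D^n$ realizing the global Agler decomposition, completing $(1)\Rightarrow(2)$.
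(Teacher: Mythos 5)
The paper does not prove this theorem: it is stated as Agler's theorem, cited to \cite{jA88}, with references (\cite{BT98}, \cite{AM02}, \cite{BB10}) for the proof. The only fragment the paper itself works out is a rational-inner, finite-dimensional version of $(2)\Rightarrow(3)$ (the ``lurking isometry'' Claim in Section 3), which matches your sketch of that implication. So there is no paper proof to compare against; what you have written is an independent proof.

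That said, your outline is a correct sketch of the standard proof of Agler's theorem, and it would pass inspection once the routine steps are filled in. The cycle $(3)\Rightarrow(1)\Rightarrow(2)\Rightarrow(3)$ is the usual one; your cone-separation/GNS argument for $(1)\Rightarrow(2)$ (closedness of the cone of decomposable kernels over a finite set, inclusion of rank-one kernels via the Szeg\H{o}/Schur-product trick, Hahn--Banach separation, multiplication operators on the resulting semi-Hilbert space, and a finite-intersection-property glue to pass from finite sets to $\D^n$) is Agler's original strategy as exposed, e.g., in Agler--McCarthy. One small imprecision: in the GNS step you justify the evaluation $f(M_1,\dots,M_n)=$ multiplication by $f|_F$ by appealing to ``$p$ has no zeros on $F$,'' which tacitly assumes $f=q/p$ is rational. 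Theorem~\ref{aglerthm} is stated for \emph{all} holomorphic $f:\D^n\to\D$. The cleaner justification---and the one that works in general---is that the joint spectrum of $(M_1,\dots,M_n)$ is contained in the finite set $F\subset\D^n$ (each $M_j$ is multiplication by $z_j$ on functions on $F$, and taking a quotient only shrinks the spectrum), so $f(M)$ is well-defined by the analytic functional calculus and is again multiplication by $f|_F$. Alternatively, your $r\to 1^-$ regularization already handles this, since $f(rM)$ is defined by an absolutely convergent operator power series once $\|rM_j\|<1$; you just need to note that $f(M):=\lim_{r\to1^-}f(rM)$ exists on the finite-dimensional space $\mcH_0$ and that this limit is multiplication by $f|_F$. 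Either route closes the gap and does not depend on rationality.
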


\begin{theorem}[\cite{vN51}]
Every $f: \D \to \D$ holomorphic satisfies the von Neumann inequality.
\end{theorem}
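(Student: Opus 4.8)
The plan is to give the classical proof of von Neumann's inequality via the Sz.-Nagy unitary dilation, and then to note how it interfaces with the machinery already set up. Fix a holomorphic $f : \D \to \D$ and a strict contraction $T$ on a Hilbert space $\mathcal{E}$, say $\|T\| = r < 1$; here $f(T) := \sum_{k \geq 0} a_k T^k$ with $a_k = f^{(k)}(0)/k!$, and the series converges in operator norm since the Cauchy estimates give $|a_k| \leq 1$ while $\sum r^k < \infty$. The first step is to reduce to polynomials: for $0 < s < 1$ set $f_s(z) = f(sz)$, which is holomorphic on a neighborhood of $\cD$ with $\|f_s\|_{\cD} \leq 1$ and whose Taylor partial sums converge to it uniformly on $\cD$, and note $f_s(T) = \sum_k a_k s^k T^k \to f(T)$ in norm as $s \uparrow 1$. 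Hence it suffices to prove $\|p(T)\| \leq \|p\|_{\cD}$ for polynomials $p$, and by the maximum modulus principle $\|p\|_{\cD} = \sup_{\T}|p|$.

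The second step is the unitary dilation: there is a Hilbert space $\mcH \supseteq \mathcal{E}$ and a unitary $U \in \mcB(\mcH)$ with $T^k = P_{\mathcal{E}} U^k|_{\mathcal{E}}$ for every $k \geq 0$. To keep things self-contained one builds this in two moves: first the minimal isometric dilation $W$ on $\mathcal{E} \oplus \ell^2(\mathbb{N}, \mathcal{D}_T)$, defined by $W(h, d_0, d_1, \dots) = (Th,\, D_T h,\, d_0, d_1, \dots)$ with $D_T = (I - T^*T)^{1/2}$, where the identity $\|Th\|^2 + \|D_T h\|^2 = \|h\|^2$ shows $W$ is isometric and $P_{\mathcal{E}} W^k|_{\mathcal{E}} = T^k$; then one extends this isometry to a unitary via the Wold decomposition, replacing the unilateral shift part by the corresponding bilateral shift. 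Because $p$ involves only nonnegative powers of the variable, $p(T) = P_{\mathcal{E}}\, p(U)|_{\mathcal{E}}$, so $\|p(T)\| \leq \|p(U)\|$; and since $U$ is normal with spectrum on $\T$, the spectral theorem gives $\|p(U)\| \leq \sup_{\lambda \in \T}|p(\lambda)| = \|p\|_{\cD}$. Combined with the first step, this yields $\|f(T)\| \leq 1$.

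The only substantive ingredient — the main obstacle, such as it is — is the existence of the unitary dilation; everything else is approximation and bookkeeping. I would close with a remark that, within the framework of this paper, the inequality can be had with no dilation theory at all: the trivial identity
\[ 1 - f(z)\overline{f(\zeta)} = (1 - z\bar\zeta)\, K(z,\zeta), \qquad K(z,\zeta) := \frac{1 - f(z)\overline{f(\zeta)}}{1 - z\bar\zeta}, \]
is an Agler decomposition for $n = 1$ once $K$ is seen to be positive semi-definite, and this holds because $K(z,\zeta) = \ip{(I - M_f M_f^*) s_\zeta}{s_z}$, where $s_\zeta(z) = (1 - z\bar\zeta)^{-1}$ is the Szeg\H{o} kernel of the Hardy space $H^2$ and $M_f$ is multiplication by $f$, a contraction since $\|M_f\| = \|f\|_{H^\infty} \leq 1$. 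Theorem~\ref{aglerthm} then delivers the von Neumann inequality directly.
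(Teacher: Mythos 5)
The paper does not prove this theorem; it states it as a classical fact with the citation \cite{vN51}, so there is no in-paper argument to compare against. Your proof is the standard Sz.-Nagy dilation proof of von Neumann's inequality and it is correct: the reduction to polynomials via $f_s(z)=f(sz)$ together with the Cauchy estimates $|a_k|\leq 1$ is sound; the operator $W$ is indeed an isometry satisfying $P_{\mathcal E}W^k|_{\mathcal E}=T^k$; the Wold decomposition upgrades $W$ to a unitary $U$ without disturbing the compressions of nonnegative powers; and the spectral theorem for $U$ together with the maximum modulus principle closes the argument. (Historically this is Sz.-Nagy's 1953 proof rather than von Neumann's original, which went through M\"obius transformations and a factorization of rational inner functions, but the dilation route is the one in standard use.) Your closing remark is also correct and pertinent: writing $K(z,\zeta)=\ip{(I-M_fM_f^*)s_\zeta}{s_z}$ with $s_\zeta$ the Szeg\H{o} kernel exhibits the one-variable Agler decomposition, and there is no circularity in invoking Theorem~\ref{aglerthm}, since the implication from a transfer function realization to the von Neumann inequality is a direct computation with the isometry $V$ and does not presuppose this theorem. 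That observation ties the classical result neatly into the framework the paper is built around, and it would be the more ``native'' proof in this context.
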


\begin{theorem}[\cite{tA63}]
Every $f: \D^2 \to \D$ holomorphic satisfies the von Neumann
inequality.
\end{theorem}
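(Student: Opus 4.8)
The plan is to establish the equivalent assertion about operators: for every commuting pair of strict contractions $T=(T_1,T_2)$ on a Hilbert space $\mcH$ one has $\|f(T)\|\le 1$. First I would reduce to polynomials. Since each $T_i$ is a strict contraction, $f(T)$ is given by an absolutely convergent power series, and $f(T)=\lim_{r\to 1^-}f_r(T)$ where $f_r(z)=f(rz)$; each $f_r$ is holomorphic on a neighborhood of $\cD^2$, hence a uniform limit on $\cD^2$ of its Taylor polynomials. So it suffices to prove $\|p(T)\|\le \sup_{\cD^2}|p|$ for every polynomial $p\in\C[z_1,z_2]$, and by the maximum principle $\sup_{\cD^2}|p|=\sup_{\T^2}|p|$.

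The heart of the argument is Andô's commuting unitary dilation: there exist a Hilbert space $\mathcal{K}\supseteq\mcH$ and \emph{commuting} unitaries $U_1,U_2\in\mcB(\mathcal{K})$ such that $T_1^jT_2^k=P_\mcH\, U_1^jU_2^k|_\mcH$ for all $j,k\ge 0$, where $P_\mcH$ is the orthogonal projection of $\mathcal{K}$ onto $\mcH$. I would build this in two stages. Stage one: produce a pair of \emph{commuting isometries} $(V_1,V_2)$ on $\mcH\oplus\bigoplus_{k\ge 0}(\mathcal{D}_{T_1}\oplus\mathcal{D}_{T_2})$, with $\mathcal{D}_{T_i}=\overline{(I-T_i^*T_i)^{1/2}\mcH}$, dilating $(T_1,T_2)$; here each $V_i$ is written as a $2\times 2$ operator matrix on $\mcH\oplus(\text{defect part})$ in the spirit of the Schäffer isometric dilation, with the defect blocks twisted by a single unitary $W\colon \mathcal{D}_{T_1}\oplus\mathcal{D}_{T_2}\to\mathcal{D}_{T_1}\oplus\mathcal{D}_{T_2}$. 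The relation $T_1T_2=T_2T_1$ forces an algebraic identity between the two defect data, and one chooses $W$ precisely so that the two candidate isometries commute. Stage two: extend a pair of commuting isometries to a pair of commuting unitaries — take the minimal unitary extension $U_1$ of $V_1$ and verify that $V_2$, which commutes with $V_1$ and hence with the structure canonically built from it, extends to a unitary $U_2$ commuting with $U_1$; this is the standard fact that commuting isometries lift to commuting unitaries.

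With the dilation in hand the conclusion is immediate. For a polynomial $p$ we get $p(T_1,T_2)=P_\mcH\, p(U_1,U_2)|_\mcH$, so $\|p(T)\|\le\|p(U_1,U_2)\|$. Since $U_1$ and $U_2$ are commuting unitaries they admit a joint spectral measure supported on $\T^2$, whence $\|p(U_1,U_2)\|\le\sup_{\T^2}|p|$. Combined with the first paragraph this yields $\|f(T)\|\le 1$.

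I expect the construction of $W$ — equivalently, producing the commuting isometric dilation — to be the only genuine obstacle; it is exactly the point where two variables is special and where a third variable breaks the argument. Everything else (the reduction to polynomials, the spectral theorem for commuting unitaries, and extending commuting isometries to commuting unitaries) is soft. An alternative organization would be to prove instead that $f$ admits an Agler decomposition $1-f(z)\overline{f(\zeta)}=\sum_{j=1}^{2}(1-z_j\bar\zeta_j)K_j(z,\zeta)$ — via a Hahn--Banach separation argument showing the cone of such sums is weak-$*$ closed and contains $1-f(z)\overline{f(\zeta)}$ — and then invoke Theorem~\ref{aglerthm}; but this merely relocates the same essential difficulty into the geometry of that cone.
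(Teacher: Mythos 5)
The paper does not prove this theorem; it simply cites And\^o's 1963 result (\cite{tA63}) as one of the inputs to Agler's equivalence (Theorem~\ref{aglerthm}). So there is no in-paper argument to compare against.

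That said, your sketch is a correct outline of the classical proof. The reduction to polynomials via $f_r(z)=f(rz)$ is sound because boundedness of $f$ forces Taylor coefficients bounded by $1$, hence absolute operator-norm convergence of $f(T_1,T_2)$ when $\|T_i\|<1$, and $f_r\to f$ in the relevant sense. The And\^o dilation step — commuting isometric dilation of $(T_1,T_2)$ on $\mcH\oplus\bigoplus_{k\ge0}(\mathcal D_{T_1}\oplus\mathcal D_{T_2})$, with a single unitary $W$ on $\mathcal D_{T_1}\oplus\mathcal D_{T_2}$ chosen to restore commutativity, then extension of the commuting isometries to commuting unitaries (It\^o's theorem) — is exactly the standard route, and the final bound $\|p(U_1,U_2)\|\le\sup_{\T^2}|p|$ via the joint spectral measure is immediate. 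You correctly flag the construction of $W$ as the one nontrivial step; for the record, it hinges on the identity
\[
\|D_{T_1}T_2 h\|^2+\|D_{T_2}h\|^2=\|D_{T_2}T_1 h\|^2+\|D_{T_1}h\|^2,
\]
valid for commuting contractions, which lets one define an isometry (extended to a unitary $W$) interchanging the two defect data; it is precisely this two-way matching that has no analogue for three tuples, which is why Parrott-type examples defeat the argument in $n\ge3$ variables. Your alternative via an Agler decomposition and Theorem~\ref{aglerthm} is also legitimate and not circular (Agler's cone/Hahn--Banach proof is an independent route to the bidisk von Neumann inequality), though as you say it pushes the same difficulty into the closedness-and-separation argument for the cone of decomposable kernels.
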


\begin{theorem}[\cite{nV74} and \cite{CD75}]
Not every $f:\D^n \to \D$ holomorphic satisfies the von Neumann
inequality when $n >2$.
\end{theorem}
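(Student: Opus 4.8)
The plan is to reduce to $n=3$ and then exhibit one explicit finite-dimensional counterexample. For the reduction: if $f_0:\D^3\to\D$ is holomorphic and $T_1,T_2,T_3$ are commuting strict contractions with $\|f_0(T_1,T_2,T_3)\|>1$, then $f(z_1,\dots,z_n):=f_0(z_1,z_2,z_3)$ maps $\D^n\to\D$ holomorphically, and the commuting strict contractions $(T_1,T_2,T_3,\tfrac12 I,\dots,\tfrac12 I)$ satisfy $f(T_1,T_2,T_3,\tfrac12 I,\dots)=f_0(T_1,T_2,T_3)$, of norm $>1$; so it is enough to work in three variables. Since polynomials are dense in the unit ball of $H^\infty(\D^n)$ in the topology of local uniform convergence and $p\mapsto p(T)$ is continuous there for a fixed tuple of strict contractions, we may in fact seek a \emph{polynomial} $p$, commuting contractions $T_1,T_2,T_3$ on a Hilbert space, and unit vectors $\xi,\eta$ with $\bigl|\ip{p(T_1,T_2,T_3)\xi}{\eta}\bigr|>\sup_{z\in\D^3}|p(z)|$.

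Two routes are available. Varopoulos's is non-constructive: using the metric theory of tensor products together with a Gaussian/random-matrix estimate, one shows the three-variable inequality fails even if the constant $1$ is replaced by any universal constant. The route I would carry out is the explicit one of Crabb and Davie: write down three commuting nilpotent contractions on a small space, modeled as a graded Hilbert space $\C\xi\oplus V_1\oplus V_2\oplus\cdots$ on which each $T_j$ raises the grading by one and acts as a partial isometry, the commutation relations $T_iT_j=T_jT_i$ cutting out a rigid system of linear conditions on the ``structure constants'' of the model; then take a homogeneous polynomial $p$ tailored to it. Because $p$ is homogeneous, $p(T_1,T_2,T_3)\xi$ lies in the single graded summand $V_{\deg p}$, and $\|p(T_1,T_2,T_3)\xi\|^2$ is an explicit quadratic expression in the coefficients of $p$ and the structure constants; also by homogeneity $\sup_{z\in\D^3}|p(z)|=\sup_{z\in\T^3}|p(z)|$.

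It is worth seeing why $p$ must be chosen with care, since this is also where three variables part ways with two. For any multilinear, and likewise any homogeneous, polynomial $p$ one has $\|p\|_\infty\ge\|p\|_{L^2(\T^n)}$, and the right side equals the $\ell^2$-norm of the coefficients of $p$; hence $\|p(T_1,T_2,T_3)\xi\|$ can beat $\|p\|_\infty$ only if the vectors $T^\alpha\xi$ appearing in $p(T_1,T_2,T_3)\xi$ are substantially \emph{non}-orthogonal, arranged so that the monomials carrying a favorable sign of the coefficient reinforce one another, while simultaneously the characters $e^{i\alpha\cdot\theta}$ cancel enough that $\sup_{\T^3}|p|$ does not grow far past $\|p\|_{L^2(\T^3)}$. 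Reconciling these opposing demands is the crux: for $n\le 2$ the theorems of von Neumann and Ando quoted above make such a $p$ impossible, whereas for $n=3$ the Crabb--Davie model supplies one. The main genuinely computational obstacle is the upper bound for $\sup_{\T^3}|p|$; I would handle it by a change of variables (exploiting the invariance $|p(\lambda z)|=|p(z)|$ for $|\lambda|=1$ and any permutation symmetry of $p$) reducing the torus maximization to a low-dimensional calculus/compactness problem, and then verify that its value is strictly below the already-computed $\|p(T_1,T_2,T_3)\xi\|$. Once that strict inequality holds the theorem follows, and by the equivalences of Theorem \ref{aglerthm} the offending $f$ additionally has no Agler decomposition and no transfer function realization.
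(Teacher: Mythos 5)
The paper does not prove this theorem; it cites Varopoulos and Crabb--Davie, so there is no in-paper argument to compare against. Evaluated on its own terms, your proposal is a correct \emph{outline} of the standard route but stops short of a proof. The reduction from general $n>2$ to $n=3$ by adjoining commuting scalar strict contractions is correct and clean (taking the extra operators to be $0$ would also do). Your structural account of the Crabb--Davie mechanism --- a finite graded Hilbert space, degree-raising partial isometries $T_j$, a homogeneous $p$ so that $p(T)\xi$ lands in a single graded piece, and the tension between making the vectors $T^\alpha\xi$ reinforce while the characters on $\T^3$ cancel --- accurately describes why such a construction can succeed and why it cannot in one or two variables.

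But that description is all the proposal contains. The actual counterexample --- an explicit choice of commuting contractions (Crabb--Davie work on an $8$-dimensional space), an explicit homogeneous polynomial $p$, the computed value of $\|p(T)\xi\|$, and a rigorous upper bound on $\sup_{\T^3}|p|$ that falls strictly below it --- is exactly what you defer with phrases like ``take a homogeneous polynomial $p$ tailored to it,'' ``I would handle it by a change of variables,'' and ``verify that its value is strictly below.'' Those verifications are not bookkeeping; they are the entire content of the theorem, and the tension you correctly identify is resolved only by the specific numbers, not by the shape of the argument. (The aside about polynomial density is also dispensable: one only needs to exhibit a single polynomial counterexample and rescale it slightly.) As written this is a well-informed roadmap rather than a proof; to close it you must actually exhibit the Crabb--Davie (or Kaijser--Varopoulos) operators and polynomial and carry out the two norm estimates.
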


Accordingly, a function of $n$ variables that satisfies any of the
above properties will be called a Schur-Agler class function.  We
shall abbreviate this to just \emph{Agler class}.  The Agler class is
natural because of its interaction with operator theory and it is
possible to write down many examples of Agler class functions simply
by writing down a transfer function realization.  On the other hand,
it is difficult to determine whether a given function is in the Agler
class and it is difficult to write down Agler decompositions
explicitly (even in two variables).  For more general information on
Theorem \ref{aglerthm} see \cite{BT98} or the book \cite{AM02}.  For
more detailed information about the theorem see \cite{BB10}.

We are interested in \emph{rational} inner Agler class functions. Let
us state what holds in two variables.

\begin{theorem} \label{twovarfacts2} 
Let $f : \D^2 \to \D$ be a rational inner function and write $f = q/p$
with $q,p \in \C[z_1,z_2]$ of degree in $z_1$ at most $d_1$ and degree
in $z_2$ at most $d_2$.

\begin{enumerate}
\item A sums of squares decomposition holds.  There exist polynomials
  $A_1,\dots, A_{d_1}, B_1, \dots, B_{d_2} \in \C[z_1,z_2]$ such that
\[
|p(z)|^2 - |q(z)|^2 = (1-|z_1|^2) \sum_{j=1}^{d_1} |A_j(z)|^2 +
(1-|z_2|^2) \sum_{j=1}^{d_2} |B_j(z)|^2
\]

\item (\cite{aK89}) $f$ has a finite dimensional transfer function
  realization.  There exists a finite dimensional Hilbert space with
  direct sum decomposition $\mcH = \mcH_1 \oplus \mcH_2$ and a unitary
  matrix $U: \C \oplus \mcH \to \C \oplus \mcH$
\[
U = \begin{matrix} & \begin{matrix} \C & \mcH \end{matrix} \\
\begin{matrix} \C \\ \mcH \end{matrix} & \begin{bmatrix} A & B \\ C &
  D \end{bmatrix} \end{matrix}
\]
such that $\dim \mcH_j \leq d_j$ for $j=1,2$ and
\[
f(z) =  A + BE(z)(I-DE(z))^{-1}C
\]
where $E(z)$ is the block diagonal matrix:
\[
E(z) = \begin{bmatrix} z_1 I_{\mcH_1} & 0 \\
0 & z_2 I_{\mcH_2} \end{bmatrix}.
\]
\end{enumerate}

\end{theorem}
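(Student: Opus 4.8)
The plan is to deduce both statements from a single self-improving argument centered on the sums of squares decomposition in part (1), since part (2) is then essentially a lurking isometry/Kolmogorov-factorization maneuver applied to (1). First I would set up the right normalization: writing $f = q/p$ with $\tilde p$ the reflected polynomial $\tilde p(z) = z_1^{d_1} z_2^{d_2} \overline{p(1/\bar z_1, 1/\bar z_2)}$, the inner condition $|f|=1$ on $\T^2$ together with the degree bounds forces $q$ to be a unimodular constant times $\tilde p$ (after clearing common factors and using that $p$ is stable), so that $|p(z)|^2 - |q(z)|^2$ vanishes on $\T^2$. The heart of (1) is then to produce the two positive semidefinite kernels in the Agler decomposition of $f$ \emph{with finite rank}, bounded by $d_1$ and $d_2$ respectively. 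I would obtain an Agler decomposition abstractly from Theorem \ref{aglerthm} (valid since $n=2$), and then argue that because $f$ is rational of bidegree $(d_1,d_2)$, the kernels $K_1, K_2$ can be taken to be rational with denominator $p(z)\overline{p(\zeta)}$ and with numerators of controlled degree; extracting a finite spanning set for the associated Hilbert spaces gives the finite sums $\sum_{j=1}^{d_1}|A_j|^2$ and $\sum_{j=1}^{d_2}|B_j|^2$. Concretely, I expect to write $A_j, B_j$ as polynomials whose existence follows from comparing Taylor coefficients in the identity $|p|^2 - |q|^2 = (1-|z_1|^2)\sum|A_j|^2 + (1-|z_2|^2)\sum|B_j|^2$ and checking the coefficient matrices are positive semidefinite of the claimed rank.

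For part (2), I would run the standard realization construction on the decomposition from (1). Rewrite the sums of squares identity, after dividing by $p(z)\overline{p(\zeta)}$, as
\[
1 - f(z)\overline{f(\zeta)} = (1 - z_1\bar\zeta_1) \sum_{j=1}^{d_1} u_j(z)\overline{u_j(\zeta)} + (1-z_2\bar\zeta_2)\sum_{j=1}^{d_2} v_j(z)\overline{v_j(\zeta)},
\]
where $u_j = A_j/p$, $v_j = B_j/p$. Set $\mcH_1 = \C^{d_1}$, $\mcH_2 = \C^{d_2}$, collect $u = (u_j)$ and $v = (v_j)$ into a single vector-valued map into $\mcH = \mcH_1 \oplus \mcH_2$, and observe the identity rearranges to
\[
1 + \ip{E(z)^* E(z) \, h(z)}{h(\zeta)}_{\mcH} = f(z)\overline{f(\zeta)} + \ip{h(z)}{h(\zeta)}_{\mcH},
\]
where $h = (u,v)$ and $E(z)$ is the indicated block-diagonal matrix. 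This exhibits the map
\[
\begin{bmatrix} 1 \\ E(z) h(z) \end{bmatrix} \longmapsto \begin{bmatrix} f(z) \\ h(z) \end{bmatrix}
\]
as an isometry from one subspace of $\C \oplus \mcH$ to another; extending it to a unitary $U = \begin{bmatrix} A & B \\ C & D\end{bmatrix}$ on $\C \oplus \mcH$ (possible since in finite dimensions an isometry between subspaces of a space extends to a unitary, the defect spaces having equal dimension) and reading off the four block equations $A + BE(z)h(z) = f(z)$, $C + DE(z)h(z) = h(z)$ yields $h(z) = (I - DE(z))^{-1}C$ and hence $f(z) = A + BE(z)(I-DE(z))^{-1}C$, with $\dim \mcH_j = d_j$ as required. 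Invertibility of $I - DE(z)$ on $\D^2$ follows from $\|D\| \le 1$ and $\|E(z)\| < 1$.

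The main obstacle is part (1): producing the Agler decomposition with the \emph{sharp} rank bounds $d_1$ and $d_2$ rather than merely \emph{some} finite rank. The abstract Agler theorem gives existence but no degree control, so the real work is a dimension count showing that the numerators $A_j$ in a minimal decomposition have bidegree dominated so that at most $d_1$ of them are linearly independent (and symmetrically for the $B_j$). I expect to argue this by localizing: after the normalization $q = (\text{unimodular})\tilde p$, one studies the zero structure of $p$ on $\T^2$ and the reproducing kernel Hilbert spaces generated by the columns of $p(z)\mathbf{e}_1^* - \cdots$; the key input is that the space of numerators is a module over polynomials in one of the variables, cut down by a single polynomial relation coming from $|p|^2 - |\tilde p|^2$, which forces the rank bound. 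An alternative, possibly cleaner route is to invoke a known one-variable-at-a-time argument (fixing $z_2$ and treating $f(\cdot, z_2)$ as a Blaschke product of degree $\le d_1$ with parameters depending on $z_2$), but making that rigorous across all $z_2$ and reassembling into a genuine positive kernel is itself delicate, so I anticipate the module-theoretic degree count to be where the effort concentrates.
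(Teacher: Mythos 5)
The paper does not actually prove Theorem \ref{twovarfacts2}: it states it as a known result, attributing part (2) to \cite{aK89} and pointing to \cite{CW99}, \cite{BSV05}, \cite{gK08a} for proofs. So the comparison must be on the merits of your sketch alone.

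Your part (2) is fine. Given the sums-of-squares identity from part (1), the lurking isometry argument you describe is exactly the standard one (and matches the paper's Claim preceding Claim \ref{claim2}, there stated for general $n$). Dividing by $p(z)\overline{p(\zeta)}$, exhibiting the map $\bigl[\,1;\ E(z)h(z)\,\bigr]\mapsto\bigl[\,f(z);\ h(z)\,\bigr]$ as a partial isometry, extending to a unitary on $\C\oplus\mcH$ (defect dimensions agree because the ambient finite-dimensional space is the same on both sides), and solving the block equations gives the realization with $\dim\mcH_j\le d_j$. No issues there.

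The genuine gap is in part (1), and it is exactly the gap you flag yourself: you do not establish that the two kernels have ranks at most $d_1$ and $d_2$. Everything up to that point is standard: the Agler decomposition exists for $n=2$; clearing denominators by $p(z)\overline{p(\zeta)}$ makes the numerators Hilbert-space-valued polynomials; rationality and the total-degree bound (via the $t\mu$ substitution as in Claim \ref{claim2}) make the decomposition finite. But that argument, which is the one used in the body of this paper to prove Theorem \ref{bounds}, only delivers the bidegree bound $\deg_{z_i}A_j\le d_i$ for $i\neq 1$ and $\deg_{z_1}A_j\le d_1-1$, which gives the rank estimates $N_1\le d_1(d_2+1)$ and $N_2\le d_2(d_1+1)$, not $N_1\le d_1$, $N_2\le d_2$. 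Your ``comparing Taylor coefficients and checking the coefficient matrices are positive semidefinite of the claimed rank'' is a restatement of the conclusion, not an argument; and your two fallback sketches (a module-theoretic degree count over $\C[z_1]$ cut down by the single relation from $|p|^2-|\tilde p|^2$, or a fibered one-variable Blaschke argument) are not carried out, and you acknowledge them as the unresolved hard part. That hard part is in fact the entire mathematical content of the theorem: the distinction between the crude bound $d_j\prod_{k\ne j}(d_k+1)$ and the sharp bound $d_j$ is precisely what separates the $n=2$ case from $n\ge 3$, as the paper's Theorem \ref{example} shows. A correct proof of the sharp two-variable bound requires a genuinely two-variable mechanism---e.g.\ Kummert's state-space construction in \cite{aK89}, the orthogonal decomposition/shift-invariance structure in \cite{BSV05}, or the Bernstein--Szeg\H{o} argument in \cite{gK08a}---none of which your sketch reproduces. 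As written, part (1) is unproven.
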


Unknown to most of the mathematics community, \cite{aK89} proved the
second item (which is well known to be equivalent to the first).  This
was pointed out to us by \cite{jB10}.  \cite{CW99} proved this result
using Agler's theorem (without concern for degree bounds) and showed
that the above result is essentially equivalent to And\^{o}'s
inequality.  For a direct proof of this result and more discussion see
\cite{BSV05} or \cite{gK08a}.

The fundamental question for this article is:
\begin{quote} To what extent does Theorem
\ref{twovarfacts2} carry over to $n$ variables if we stipulate that
our rational inner function is in the Agler class?
\end{quote}

The two variable arguments in \cite{CW99} can be used to establish the
following theorem.  A result of this type was announced by
\cite{jB10}.  We need to use some aspects of the proof so we sketch
the proof later on.

\begin{theorem} \label{unsurp} 
Let $f: \D^n \to \D$ be an Agler class rational inner function and
write $f = q/p$, with $q,p \in \C[z_1,\dots, z_n]$. Then,
\begin{enumerate}
\item A sums of squares decomposition holds.  There exist integers
  $N_1, \dots, N_n$ such that
\[
|p(z)|^2 - |q(z)|^2 = \sum_{j=1}^{n} (1-|z_j|^2) \sum_{k=1}^{N_j}
|A_{j,k}(z)|^2
\]
where $A_{j,k} \in \C[z_1,\dots, z_n]$.

\item $f$ has a finite dimensional transfer function realization.  
\end{enumerate}

\end{theorem}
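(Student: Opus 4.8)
The plan is to run the two-variable Cole--Wermer argument (as in \cite{CW99}) keeping track of which steps genuinely use $n=2$ and which survive because we have \emph{assumed} $f$ is in the Agler class. The starting point is Agler's theorem: since $f$ is in the Agler class, condition (2) of Theorem \ref{aglerthm} gives positive semi-definite kernels $K_1,\dots,K_n$ on $\D^n\times\D^n$ with
\[
1 - f(z)\overline{f(\zeta)} = \sum_{j=1}^{n}(1-z_j\bar\zeta_j)K_j(z,\zeta).
\]
First I would clear denominators: writing $f = q/p$ with $p$ zero-free on $\D^n$, multiply through by $p(z)\overline{p(\zeta)}$ to get
\[
p(z)\overline{p(\zeta)} - q(z)\overline{q(\zeta)} = \sum_{j=1}^{n}(1-z_j\bar\zeta_j)\,L_j(z,\zeta),\qquad L_j(z,\zeta) := p(z)\overline{p(\zeta)}K_j(z,\zeta),
\]
and each $L_j$ is still positive semi-definite. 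Setting $\zeta = z$ on $\T^n$, where $|q|=|p|$ by innerness, forces $\sum_j (1-|z_j|^2)L_j(z,z)=0$; since each summand is nonnegative on $\T^n$ this gives no contradiction, but it is the lever one uses to show the $L_j$ are in fact \emph{polynomial} reproducing kernels of finite rank.

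The key step is the finite-rank/polynomial claim: each $L_j$ is a polynomial in $z$ and $\bar\zeta$, so it has a finite-dimensional Gram representation $L_j(z,\zeta) = \sum_{k=1}^{N_j} A_{j,k}(z)\overline{A_{j,k}(\zeta)}$ with $A_{j,k}\in\C[z_1,\dots,z_n]$. To see this I would argue as in the two-variable case: the left-hand side $p(z)\overline{p(\zeta)}-q(z)\overline{q(\zeta)}$ lives in a finite-dimensional space of polynomials in $z$ tensored with polynomials in $\bar\zeta$, and one can run the contractive-multiplier / lurking-isometry bookkeeping in the reproducing kernel Hilbert spaces $\mcH(L_j)$ to confine these spaces to polynomials of bounded degree (degree in $z_j$ strictly less than the $z_j$-degree of $p$, and bounded in the other variables by that of $p$ and $q$). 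Plugging the Gram decompositions back in and restricting to the diagonal yields exactly
\[
|p(z)|^2 - |q(z)|^2 = \sum_{j=1}^{n}(1-|z_j|^2)\sum_{k=1}^{N_j}|A_{j,k}(z)|^2,
\]
which is item (1).

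For item (2), I would feed the finite sums-of-squares decomposition into the standard lurking-isometry construction: define $\mcH_j = \C^{N_j}$, form the column vector-valued map $z\mapsto (A_{j,k}(z))_k$, and the identity above rearranges into
\[
|q(z)|^2 + \sum_j|z_j|^2\|u_j(z)\|^2 = |p(z)|^2 + \sum_j\|u_j(z)\|^2
\]
for suitable polynomial vectors $u_j$; this equality of two positive quantities means the map
\[
\begin{bmatrix} p(z) \\ E(z)u(z)\end{bmatrix} \longmapsto \begin{bmatrix} q(z) \\ u(z)\end{bmatrix}
\]
extends to an isometry $V = \begin{bmatrix} A & B \\ C & D\end{bmatrix}$ on $\C\oplus\mcH$ with $\mcH = \bigoplus_j\mcH_j$, and unwinding the relation $Vx = y$ in the usual way produces $f(z) = A + BE(z)(I-DE(z))^{-1}C$ (invertibility of $I-DE(z)$ on $\D^n$ follows because $p$ is zero-free there). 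So the realization is automatically finite-dimensional once (1) holds.

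The main obstacle is the finite-rank/polynomiality step: a priori Agler's theorem only supplies kernels that are holomorphic in $z$ and antiholomorphic in $\zeta$, with no control on size. In two variables one knows (via the Cole--Wermer analysis, or Kummert's) that these can be taken polynomial of controlled degree; I expect that argument to go through verbatim here because it is local in nature and uses only the Agler decomposition, the rationality of $f$, and innerness — none of which cares about $n$. The genuinely $n$-dependent phenomenon (that the $N_j$ cannot be taken as small as the naive degree count predicts, cf. the function $g$ in the prologue) is a \emph{lower} bound and does not interfere with the existence statements here. I would therefore present the proof as a careful sketch, citing \cite{CW99} for the pieces that transfer unchanged and writing out only the steps where the number of variables might plausibly matter.
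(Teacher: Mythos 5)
Your outline follows the same route the paper takes: invoke Agler's theorem to get kernels $K_j$, clear denominators by multiplying through by $p(z)\overline{p(\zeta)}$, argue that the resulting kernels $L_j$ are polynomial and hence of finite rank, and then run the lurking isometry to get the transfer function realization. The lurking-isometry step (item (2) from item (1)) is correct and matches the paper's Claim~1 verbatim.

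The problem is in the crucial middle step, which you flag as ``the main obstacle'' and then do not actually carry out. Worse, the ``lever'' you propose is vacuous: setting $\zeta = z$ with $z\in\T^n$ makes every factor $1-|z_j|^2$ equal to $0$, so the identity $\sum_j(1-|z_j|^2)L_j(z,z)=0$ is trivially $0=0$ and gives no information whatsoever about the $L_j$. There is no contradiction or constraint to extract from the boundary. The genuine argument (which the paper spells out and attributes to Cole--Wermer) is to restrict to rays \emph{inside} the polydisk: set $z = \zeta = t\mu$ with $t\in\D$ and $\mu\in\T^n$, divide by $1-|t|^2$, expand both sides as Fourier series in $\mu$, and look at the zeroth Fourier coefficient. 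This turns the identity into
\[
\frac{\sum_\alpha |t|^{2|\alpha|}\left(|p_\alpha|^2 - |q_\alpha|^2\right)}{1-|t|^2}
= \sum_{j=1}^{n}\sum_\alpha |\vec{F}_{j,\alpha}|^2\, |t|^{2|\alpha|},
\]
in which the left side is a polynomial in $|t|^2$ of degree at most $r-1$ (with $r$ the maximum total degree of $p,q$), forcing $\vec{F}_{j,\alpha}=0$ for $|\alpha|\geq r$. That is the step that converts ``analytic kernel'' into ``polynomial of bounded degree,'' and it cannot be replaced by a boundary observation. You should also note that for Theorem~\ref{unsurp} you only need a bound on the \emph{total} degree; the sharper per-variable bound $\deg_{z_j} < \deg_{z_j} p$ that you mention is a separate refinement (the paper's Theorem~\ref{bounds}, which requires an additional amplification trick replacing $z_1$ by $z_1^M$) and is not needed here.
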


As will be seen later, each $N_j$ is just the dimension of 
\[
\text{span} \{A_{j,k}: k=1,\dots, N_j\}
\]
and the dimensions of the Hilbert spaces in the transfer function
realization are also given by $N_j = \dim \mcH_j$.

One cannot control the number of terms in the sums of squares (and the
dimension of the transfer function realization) as precisely as in two
variables.  To emphasize this point, observe that if we write down a
finite dimensional transfer function realization as in Definition
\ref{def:trans}
\[
f(z_1,\dots,z_n) = A + BE(z)(I-DE(z))^{-1} C
\]
where we assume $\dim \mcH_j \leq d_j$, then $f = q/p$ is a rational
function where $q, p$ each have degree at most $d_j$ in the variable
$z_j$.  This follows from Cramer's rule.  (It can also be shown by
direct calculation that $f$ is indeed inner.)

Conversely, if one starts with an Agler class rational inner function
$f = q/p$ where $q,p$ each have degree at most $d_j$ in the variable
$z_j$, then something surprising occurs.  One cannot in general use
$\dim \mcH_j \leq d_j$ in the transfer function realization.  The
dimension of $\mcH_j$ may need to be chosen larger than $d_j$.
Theorem \ref{bounds} presents the bound we can prove on $\dim \mcH_j$
and Theorem \ref{example} gives an example which shows the bound $\dim
\mcH_j \leq d_j$ is not in general possible.

\begin{theorem} \label{bounds} 
Using the assumptions and notation of Theorem \ref{unsurp}, assume the
degree of $q, p$ is at most $d_j$ in the variable $z_j$ for $j=1,
\dots, n$.  Write $d =(d_1,d_2,\dots, d_n)$.  Then,
\begin{enumerate}
\item Each $A_{j,k}$ (from Theorem \ref{unsurp}) satisfies
\[
\deg_{z_i} A_{j,k} \leq \begin{cases} d_i & i \ne j \\ d_i-1 & i =
  j \end{cases}
\]
As a result, the integers $N_1,\dots, N_n$ in Theorem \ref{unsurp} can
be bounded as follows
\[
N_j \leq d_j \prod_{k\ne j} (d_k+1).
\]

\item The transfer function realization of $f$ can be chosen so that
  the dimensions of the blocks satisfy
\[
\dim \mcH_j \leq d_j \prod_{k\ne j} (d_k+1).
\]

In particular, $f$ has a transfer function realization of size 
\[
\sum_{j=1}^{n} d_j \prod_{k\ne j} (d_k+1).
\]
\end{enumerate}
\end{theorem}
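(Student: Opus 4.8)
The plan is to extract the degree bounds on the $A_{j,k}$ directly from the structure of the sums-of-squares decomposition in Theorem~\ref{unsurp}, and then to deduce the dimension bounds on the $\mcH_j$ from the fact, alluded to in the remark following Theorem~\ref{unsurp}, that $N_j$ equals the dimension of the span of $\{A_{j,k} : k = 1,\dots,N_j\}$ and that one may take $\dim \mcH_j = N_j$. First I would recall how the sums-of-squares decomposition is produced (from the sketch of Theorem~\ref{unsurp} that the paper promises later): given $|p|^2 - |q|^2 = \sum_j (1-|z_j|^2)\sum_k |A_{j,k}|^2$, one may assume the vectors $(A_{j,k}(z))_k$ are built so that, for each fixed $j$, they span a space of functions lying in the coordinate-subspace of polynomials relevant to the recursion in the $z_j$ variable. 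The content of part~(1) is the claim that this forces $\deg_{z_i} A_{j,k} \le d_i$ for $i \ne j$ and $\deg_{z_i} A_{j,k} \le d_i - 1$ for $i = j$.

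For part~(1), the key step is a degree-counting argument on the identity
\[
|p(z)|^2 - |q(z)|^2 = \sum_{j=1}^n (1-|z_j|^2)\sum_{k=1}^{N_j} |A_{j,k}(z)|^2.
\]
Since $p,q$ have degree at most $d_i$ in $z_i$, the left side, viewed as a polynomial in $z_i$ and $\bar z_i$ (with the other variables as parameters), has degree at most $d_i$ in each. On the right, the term $j = i$ contributes $(1-|z_i|^2)\sum_k |A_{i,k}|^2$; for this to have $z_i$-degree at most $d_i$ we need $\deg_{z_i} A_{i,k} \le d_i - 1$. For $j \ne i$, the factor $(1 - |z_j|^2)$ carries no $z_i$, so we need $\deg_{z_i} A_{j,k} \le d_i$. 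Making this rigorous requires arguing that there is no cancellation forcing a term of top degree to vanish — here I would use that $|p|^2 - |q|^2 \ge 0$ on the torus together with the fact that each inner sum $\sum_k |A_{j,k}(z)|^2$ is itself a Hermitian sum of squares, so the highest-degree part in $z_i\bar z_i$ cannot be cancelled between different values of $j$ (the $(1-|z_i|^2)$ factor makes the $j=i$ block's contribution to the pure top $|z_i|^{2(d_i)}$ term have a definite sign relative to the others). Once the degree bounds hold, each $A_{j,k}$ lies in the finite-dimensional space of polynomials with $\deg_{z_i} \le d_i$ ($i\ne j$) and $\deg_{z_j}\le d_j-1$, which has dimension $d_j\prod_{k\ne j}(d_k+1)$; hence $N_j$, being the dimension of the span of the $A_{j,k}$, is at most this number.

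For part~(2), I would invoke the standard lurking-isometry construction that turns an Agler decomposition into a transfer function realization (the $(3)\Leftrightarrow(2)$ direction of Theorem~\ref{aglerthm}), applied with the concrete finite-rank kernels $K_j(z,\zeta) = \sum_k A_{j,k}(z)\overline{A_{j,k}(\zeta)}/(p(z)\overline{p(\zeta)})$. The point is that in that construction $\mcH_j$ can be taken to be the span of the functions $z \mapsto (A_{j,k}(z))_k$, equivalently the column space of the corresponding Gram matrix, so $\dim \mcH_j = N_j \le d_j\prod_{k\ne j}(d_k+1)$; summing over $j$ gives the stated size. The main obstacle is part~(1): one must rule out degree cancellation in the Hermitian identity, and the cleanest route is probably to evaluate on the torus (where $1-|z_i|^2$-weighted terms become visible as derivatives in the radial direction, or where one restricts $z_i$ to the circle and lets the remaining variables run over a generating set) rather than to argue purely formally about monomials; everything after the degree bounds is bookkeeping on top of machinery already available from Theorem~\ref{aglerthm} and the sketch of Theorem~\ref{unsurp}.
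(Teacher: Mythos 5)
Your overall strategy — extract the degree bounds directly from the sums-of-squares identity by degree-counting and positivity, then feed the resulting finite-rank kernels into the lurking-isometry construction — is genuinely different from the paper's. The paper does not argue degree-by-degree on the identity at all: it first proves a \emph{total} degree bound of $r-1$ on each $\vec F_j$ (Claim~\ref{claim2}, by setting $z=\zeta=t\mu$, $\mu\in\T^n$, and reading off the zeroth Fourier coefficient in $\mu$), and then obtains the per-variable bounds by an amplification trick: substitute $z_1\mapsto z_1^M$, apply the total-degree bound of Claim~\ref{claim2} to the inflated decomposition of $p(z_1^M,z')$, and let $M\to\infty$ to isolate $\al_1\le d_1-1$ (for $j=1$) and $\al_1\le d_1$ (for $j\ne 1$). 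Your part~(2) is essentially the paper's Claim~1, so that portion is fine.

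However, the crucial step of your part~(1) — ruling out cancellation — is not established in your sketch, and the phrasing suggests a confusion about which variables to restrict. Concretely: to bound $\deg_{z_i}A_{i,k}\le d_i-1$ you should restrict $z_l\in\T$ for \emph{all $l\ne i$} (not $z_i$ to the circle, as you write); this kills every $j\ne i$ term and leaves $|p|^2-|q|^2=(1-|z_i|^2)\sum_k|A_{i,k}|^2$, whereupon comparing the coefficient of $z_i^{D+1}\bar z_i^{D+1}$ (with $D$ the top $z_i$-degree among the $A_{i,k}$) gives $-\sum_k|a_{i,k,D}(z')|^2=0$ on $\T^{n-1}$, hence $D\le d_i-1$. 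But your sketch then says nothing about the $j\ne i$ case, which is the subtler one: there the $(1-|z_j|^2)$ factor does not carry a definite sign in $z_i$, so "definite sign relative to the others" is not correct as stated, and the degrees $\deg_{z_i}A_{j,k}$ and $1+\deg_{z_i}A_{i,k}$ can coincide, allowing cancellation in the coefficient of $z_i^a\bar z_i^a$. To close this you need a second step: having established $\deg_{z_i}A_{i,k}\le d_i-1$, note that for $a>d_i$ the $j=i$ block contributes nothing to $z_i^a\bar z_i^a$, and the remaining identity $\sum_{j\ne i}(1-|z_j|^2)\bigl(\sum_k|a_{j,k,a}(z_{\ne i})|^2\bigr)=0$, restricted to $z_{\ne i}\in\D^{n-1}$ where each $1-|z_j|^2>0$, forces each $a_{j,k,a}\equiv 0$, hence $\deg_{z_i}A_{j,k}\le d_i$. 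With this two-step repair your route is valid and arguably more elementary than the paper's (it avoids both the Fourier-coefficient lemma and the $z_1\mapsto z_1^M$ amplification), but as written there is a genuine gap.
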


\begin{theorem} \label{example} The rational inner function  
\[
f(z) = \frac{3z_1z_2z_3 - z_1- z_2 - z_3}{3 - z_1 - z_2 - z_3}
\]
is in the Agler class.  It has a transfer function realization of
size $9$ but it cannot be realized with size less than $6$.
\end{theorem}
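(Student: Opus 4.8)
The plan is to write $f=q/p$ with $p=3-z_1-z_2-z_3$ (zero-free on $\overline{\D^3}$ away from the single point $(1,1,1)$) and $q$ the reflection of $p$; then $f$ is rational inner of multidegree $(1,1,1)$, $p(1,1,1)=q(1,1,1)=0$, and $f(z,z,z)=-z^2$. I move freely between a transfer function realization of $f$, an Agler decomposition $1-f(z)\overline{f(\zeta)}=\sum_j(1-z_j\bar\zeta_j)K_j(z,\zeta)$, and the sums of squares decomposition obtained by clearing denominators, $p(z)\overline{p(\zeta)}-q(z)\overline{q(\zeta)}=\sum_j(1-z_j\bar\zeta_j)L_j(z,\zeta)$ with $L_j:=p(z)\overline{p(\zeta)}K_j(z,\zeta)$; for a minimal realization each $L_j$ is (by Cramer's rule) a genuine positive semi-definite polynomial kernel with $N_j:=\operatorname{rank}L_j=\operatorname{rank}K_j\le\dim\mcH_j$. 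Since passing to a minimal realization never increases the size, it suffices to understand $\sum_j N_j$.

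For the upper bound I first have to check that $f$ is in the Agler class --- in three variables this does not come for free --- which I would do by exhibiting the explicit Agler decomposition (equivalently the unitary realization promised in the Prologue), using the $S_3$-symmetry of $f$ to cut the unknowns down to a small linear system. Granting that, Theorems \ref{unsurp} and \ref{bounds} provide a sums of squares decomposition with $\deg_{z_i}A_{j,k}\le 1$ for $i\ne j$ and $\deg_{z_j}A_{j,k}=0$. The key observation is that in \emph{any} such decomposition every $A_{j,k}$ vanishes at $(1,1,1)$: setting $\zeta=(1,1,1)$ in the polarized identity kills the left-hand side, and then setting $z_i=1$ for $i\ne j$ isolates $(1-z_j)\sum_k|A_{j,k}(1,1,1)|^2\equiv 0$. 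Hence each $A_{1,k}$ lies in the three-dimensional space $\{P(z_2,z_3):\deg_{z_2}P\le 1,\ \deg_{z_3}P\le 1,\ P(1,1)=0\}$, and cyclically, so $N_j\le 3$ and $f$ has a realization of size $\le 9$; the explicit decomposition can be arranged with $N_1=N_2=N_3=3$, so size exactly $9$.

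For the lower bound I claim $N_j\ge 2$ for every $j$, which forces $\sum_j N_j\ge 6$. Suppose instead that, say, $N_3\le 1$; one easily checks $N_3\ge 1$, so $L_3(z,\zeta)=A(z)\overline{A(\zeta)}$ for a single polynomial $A$. Restrict the polarized identity to the torus slice $z_1=\zeta_1=\lambda\in\T$, $z_2=\zeta_2=\mu\in\T$: the $j=1$ and $j=2$ terms vanish (their prefactors $1-|\lambda|^2$, $1-|\mu|^2$ do), and a direct check shows that for every $(\lambda,\mu)\in\T^2$ the slice $z_3\mapsto f(\lambda,\mu,z_3)$ is a degree $\le 1$ Blaschke product, with
\[
p(\lambda,\mu,z_3)\overline{p(\lambda,\mu,\zeta_3)}-q(\lambda,\mu,z_3)\overline{q(\lambda,\mu,\zeta_3)}=(|3-\lambda-\mu|^2-1)(1-z_3\bar\zeta_3).
\]
Comparing with the right-hand side and cancelling $1-z_3\bar\zeta_3$, the product $A(\lambda,\mu,z_3)\overline{A(\lambda,\mu,\zeta_3)}$ is independent of $z_3,\zeta_3$, so $A(\lambda,\mu,\cdot)$ is a constant $A_0(\lambda,\mu)$ (a polynomial in $\lambda,\mu$) with $|A_0(\lambda,\mu)|^2=|3-\lambda-\mu|^2-1$ on $\T^2$. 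But $|3-\lambda-\mu|^2-1=10-3(\lambda+\lambda^{-1}+\mu+\mu^{-1})+(\lambda\mu^{-1}+\lambda^{-1}\mu)$ has no $\lambda\mu$ or $\lambda^{-1}\mu^{-1}$ frequency; writing $A_0=a_0+a_1\lambda+a_2\mu+a_3\lambda\mu$ (one reduces to this bidegree after factoring out monomials), matching the $\lambda\mu$, $\lambda\mu^{-1}$, $\lambda$, and $\mu$ frequencies forces $a_3\bar a_0=0$, $a_1\bar a_2=1$, and then $\sum_i|a_i|^2=11$, contradicting that the constant frequency is $10$. Hence $N_3\ge 2$; by symmetry $N_1,N_2\ge 2$, so the size is $\ge 6$.

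The main obstacle is the lower bound. The degree bounds by themselves give only $N_j\le 3$, and the obvious one-dimensional restrictions (for instance to the diagonal, where $f=-z^2$) yield nothing better than $\sum_j N_j\ge 3$. What makes the argument work is the less obvious choice of restriction --- freezing two variables on the torus --- together with the structural collapse of $f$ to a degree $\le 1$ Blaschke product on that slice, which rigidly pins down $|A_0|^2$ on $\T^2$ and lets one read off an incompatibility. The other place needing real effort is the explicit Agler decomposition that certifies $f$ is in the Agler class to begin with; the rest is bookkeeping.
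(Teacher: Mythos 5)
Your lower-bound argument is essentially the paper's: freeze two of the three variables on the torus so the corresponding $(1-z_j\bar\zeta_j)$-terms drop out, solve for the surviving sums-of-squares term on $\T^2$, and show by matching Fourier coefficients that the resulting trigonometric polynomial $10 - 6\,\mathrm{Re}(\lambda+\mu) + 2\,\mathrm{Re}(\lambda\bar\mu)$ cannot be a single squared modulus of a bidegree-$(1,1)$ polynomial. The only differences are cosmetic: you derive that trigonometric polynomial from the two-variable slice computation $p\bar p - q\bar q = (|3-\lambda-\mu|^2-1)(1-z_3\bar\zeta_3)$ rather than reading it off the explicit decomposition, and your contradiction is $\sum_i|a_i|^2=11\ne 10$ rather than the paper's $|a|=\sqrt 8$ versus $|\bar ab|=3$, but these are the same five equations. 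Your extra observation that each $A_{j,k}$ must vanish at $(1,1,1)$ — obtained by setting $\zeta=(1,1,1)$ in the polarized identity, using $p(1,1,1)=q(1,1,1)=0$ — is a genuine bonus beyond what the paper states: it shows $N_j\le 3$ for \emph{every} sums-of-squares decomposition, sharpening the generic Theorem~\ref{bounds} bound $N_j\le d_j\prod_{k\ne j}(d_k+1)=4$ for this particular $f$.

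The one genuine gap, which you flag yourself, is that you never exhibit the Agler decomposition (or unitary realization) certifying that $f$ lies in the Agler class. In three variables this membership is exactly what is at issue — it is not automatic — and the size-$9$ upper bound in the theorem statement is an assertion, not a plan. The paper supplies the decomposition explicitly: with
\[
S(z,w)=\bigl|\sqrt 3(zw-\tfrac z2-\tfrac w2)\bigr|^2+\bigl|\sqrt 3(1-\tfrac z2-\tfrac w2)\bigr|^2+\bigl|\tfrac{1}{\sqrt 2}(z-w)\bigr|^2,
\]
one verifies
\[
|p(z)|^2-|q(z)|^2=(1-|z_1|^2)S(z_2,z_3)+(1-|z_2|^2)S(z_1,z_3)+(1-|z_3|^2)S(z_1,z_2).
\]
Your plan to exploit the $S_3$-symmetry of $f$ to cut down the unknowns is exactly the right instinct — note $S$ is symmetric in its two arguments — but until such an identity is actually checked, both the Agler class membership and the size-$9$ realization remain unproved.
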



\section{Proof of Theorems \ref{unsurp} and \ref{bounds}}

\begin{claim} If we have a sums of squares decomposition,
then we automatically have a finite dimensional transfer function
realization.
\end{claim}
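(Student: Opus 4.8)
The claim is the standard "sums of squares $\Rightarrow$ transfer function realization" implication, so the plan is to run the Kolmogorov/Gram-matrix construction (a lurking-isometry argument) directly from the sums of squares identity, being careful to track finite dimensionality. Starting from
\[
|p(z)|^2 - |q(z)|^2 = \sum_{j=1}^{n} (1-|z_j|^2) \sum_{k=1}^{N_j} |A_{j,k}(z)|^2,
\]
I would first polarize. Since both sides are (the diagonal restrictions of) sesqui-analytic kernels on $\D^n \times \D^n$ that agree on the diagonal, they agree everywhere, giving
\[
p(z)\overline{p(\zeta)} - q(z)\overline{q(\zeta)} = \sum_{j=1}^{n}(1 - z_j\bar\zeta_j)\, G_j(z,\zeta), \qquad G_j(z,\zeta) := \sum_{k=1}^{N_j} A_{j,k}(z)\overline{A_{j,k}(\zeta)}.
\]
Dividing by $p(z)\overline{p(\zeta)}$ (legal since $p$ is zero-free on $\D^n$) and setting $f = q/p$ turns this into the Agler decomposition $1 - f(z)\overline{f(\zeta)} = \sum_j (1 - z_j\bar\zeta_j) K_j(z,\zeta)$ with $K_j(z,\zeta) = G_j(z,\zeta)/\big(p(z)\overline{p(\zeta)}\big)$, and crucially each $K_j$ has a finite-dimensional range: $K_j(z,\zeta) = \langle u_j(\zeta), u_j(z)\rangle_{\C^{N_j}}$ where $u_j(z) = \big(A_{j,1}(z),\dots,A_{j,N_j}(z)\big)/p(z)$.

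**The main step.** The core of the argument is the rearrangement into an isometry. Rewrite the polarized identity as
\[
1 + \sum_{j=1}^{n} z_j\bar\zeta_j K_j(z,\zeta) = f(z)\overline{f(\zeta)} + \sum_{j=1}^{n} K_j(z,\zeta).
\]
Set $\mcH_j = \C^{N_j}$, $\mcH = \bigoplus_j \mcH_j$, and $u(z) = \bigoplus_j u_j(z) \in \mcH$, so that $\sum_j K_j(z,\zeta) = \langle u(\zeta), u(z)\rangle_{\mcH}$ and $\sum_j z_j\bar\zeta_j K_j(z,\zeta) = \langle E(\zeta) u(\zeta), E(z) u(z)\rangle_{\mcH}$ with $E(z)$ the block-diagonal matrix of Definition \ref{def:trans}. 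The identity then reads
\[
\langle 1, 1\rangle_{\C} + \langle E(\zeta)u(\zeta), E(z)u(z)\rangle_{\mcH} = \langle f(\zeta), f(z)\rangle_{\C} + \langle u(\zeta), u(z)\rangle_{\mcH},
\]
i.e. the map
\[
V_0 : \begin{bmatrix} 1 \\ E(z)u(z) \end{bmatrix} \longmapsto \begin{bmatrix} f(z) \\ u(z) \end{bmatrix}
\]
is a well-defined isometry (well-definedness and isometry both follow from the displayed inner-product identity) from $\mathcal{D} := \overline{\operatorname{span}}\{(1, E(z)u(z)) : z \in \D^n\} \subseteq \C \oplus \mcH$ onto a subspace of $\C \oplus \mcH$. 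Extend $V_0$ to an isometry $V = \left[\begin{smallmatrix} A & B \\ C & D\end{smallmatrix}\right]$ on all of $\C \oplus \mcH$ (for instance by setting it to $0$ on $\mathcal{D}^\perp$, or enlarging $\mcH$ if one insists on a co-isometry/unitary — but the definition only demands an isometry). Since $\mcH$ is finite dimensional, this $V$ lives on a finite-dimensional space, which is exactly what the claim needs. Finally, reading off $V(1 \oplus E(z)u(z)) = f(z)\oplus u(z)$ coordinatewise gives $A + BE(z)u(z) = f(z)$ and $C + DE(z)u(z) = u(z)$; solving the second equation for $u(z) = (I - DE(z))^{-1}C$ (the inverse exists because $\|DE(z)\| < 1$ for $z \in \D^n$, as $D$ is a contraction and $\|E(z)\| < 1$) and substituting into the first yields the realization $f(z) = A + BE(z)(I - DE(z))^{-1}C$.

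**Where the difficulty lies.** None of the individual steps is deep, but the step requiring genuine care is the well-definedness and isometry of $V_0$: one must check that the formula for $V_0$ does not depend on the representation of a vector in $\mathcal{D}$ as a (limit of) linear combination of the generating vectors $(1, E(z)u(z))$, which is precisely where the polarized identity is used — and one should verify that polarization of the sums of squares identity is valid, i.e. that both sides are restrictions to the diagonal of kernels that are holomorphic in $z$ and anti-holomorphic in $\zeta$, so that equality on the diagonal forces equality off it. The other point deserving a remark is that one need not chase a \emph{minimal} realization here; the claim only asserts existence of \emph{a} finite-dimensional one, so taking $\mcH_j = \C^{N_j}$ with $N_j$ from the sums of squares decomposition suffices, and issues of minimality (the real subject of the paper) are deferred. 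I would also note in passing that $\dim \mcH_j = N_j$ can be reduced to $\dim \operatorname{span}\{A_{j,k} : k = 1,\dots,N_j\}$ by discarding linear dependencies among the $A_{j,k}$ before forming $u_j$, which is the remark the paper makes following Theorem \ref{unsurp}.
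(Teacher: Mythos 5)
Your argument is essentially the same lurking-isometry construction the paper uses; the only cosmetic difference is that you divide the polarized identity through by $p(z)\overline{p(\zeta)}$ and work with $f$ and the rational kernels $K_j$, whereas the paper stays with the polynomials $p,q,\vec F_j$ throughout and only divides by $p$ at the very end. That choice is a matter of taste and both routes lead to the same isometric map on the same finite-dimensional subspace.

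However, the extension step contains a genuine error. Setting $V=0$ on $\mathcal{D}^\perp$ does \emph{not} produce an isometry of $\C\oplus\mcH$: if $\mathcal{D}^\perp\ne\{0\}$, any unit vector $v\in\mathcal{D}^\perp$ has $\|Vv\|=0\ne 1$; equivalently $V^*V=P_{\mathcal{D}}\ne I$. Since Definition \ref{def:trans} requires $V$ to be isometric on all of $\C\oplus\mcH$, this construction does not yield a transfer function realization as defined. The correct extension (which is what the paper does) is to a \emph{unitary}: $V_0$ is a linear isometric bijection from $\mathcal{D}$ onto $\operatorname{range}V_0$, both subspaces of the finite-dimensional space $\C\oplus\mcH$, so $\dim\mathcal{D}^\perp=\dim(\operatorname{range}V_0)^\perp$ and any isometric identification of these orthogonal complements extends $V_0$ to a unitary on $\C\oplus\mcH$. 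In particular, no enlargement of $\mcH$ is needed, contrary to your parenthetical; enlargement is only relevant when $\mcH$ is infinite-dimensional. Everything else in your write-up (polarization of the sesqui-analytic kernels, well-definedness and isometry of $V_0$ from the inner-product identity, and the algebra leading to $f=A+BE(I-DE)^{-1}C$) is correct and matches the paper.
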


\begin{proof}
  This is the well-known lurking isometry argument.  So, suppose
  $f=q/p$ is rational, inner, and Agler class, and
\[
|p(z)|^2 - |q(z)|^2 = \sum_{j=1}^{n} (1-|z_j|^2)|\vec{F}_j(z)|^2
\] 
where $\vec{F}_j \in \C^{N_j}[z]$ is a vector polynomial (the notation
is simpler if we use vector polynomials in place of sums of squares).

Rearranging we get
\[
|p(z)|^2 + \sum_{j=1}^{n} |z_j|^2 |\vec{F_j}(z)|^2 = |q(z)|^2 +
\sum_{j=1}^{n} |\vec{F_j}(z)|^2.
\]
By the polarization theorem for holomorphic functions
\[
\begin{aligned}
&p(z)\overline{p(\zeta)} +
  \sum_{j=1}^{n}\ip{z_j\vec{F_j(z)}}{\zeta_j \vec{F_j}(\zeta)} \\
&= q(z)\overline{q(\zeta)} +
  \sum_{j=1}^{n}\ip{\vec{F_j(z)}}{\vec{F_j}(\zeta)}.
\end{aligned}
\]

This formula can be used to show that the map which sends
\[
\begin{bmatrix} p(z) \\ z_1 \vec{F}_1(z) \\ \vdots \\ z_n
  \vec{F}_n(z) \end{bmatrix} \mapsto \begin{bmatrix} q(z) \\ 
  \vec{F}_1(z) \\ \vdots \\ \vec{F}_n(z) \end{bmatrix}
\]
is a well-defined linear and isometric map (initially defined on the
span of the elements of the form given on the left into the span of
the elements of the given form on the right).  It may be extended (if
necessary) to a unitary matrix $U$ of dimensions $1+\sum_{j=1}^{n}
N_j$ which we write in block form
\[
U = \begin{matrix} & \begin{matrix} \C & \C^N \end{matrix} \\
\begin{matrix} \C \\ \C^N \end{matrix} & \begin{bmatrix} A & B \\ C &
  D \end{bmatrix} \end{matrix}
\]
where $N = \sum_j N_j$.  Let us write 
\[
\vec{F}(z) = \begin{bmatrix} \vec{F}_1(z) \\ \vdots
  \\ \vec{F}_n(z) \end{bmatrix}
\]
and let $E(z)$ be the block $N\times N$ diagonal matrix with block
diagonal entries $z_1I_{N_1}, z_2I_{N_2},\dots, z_n I_{N_n}$.
Then, by construction of $U$
\[
\begin{aligned}
A p(z) + BE(z)\vec{F}(z) & = q(z)\\
C p(z) + DE(z) \vec{F}(z) &= \vec{F}(z).
\end{aligned}
\]
If one first solves for $\vec{F}(z)$ using the second equation, and
then inserts this into the first equation, we arrive at
\[
q/p(z) = A + BE(z)(I-DE(z))^{-1}C
\]
as desired.
\end{proof}

Next, we rehash the arguments of Cole and Wermer (which were
originally applied to two variables) in the $n$-variable context to
prove Theorem \ref{unsurp}.  This repetition is necessary because we
need some of the details of the proof in order to keep track of
degrees in Theorem \ref{bounds}. 

\begin{claim} \label{claim2}
Suppose $f=q/p$ is rational inner Agler class and let $r$ be the
maximum of the total degrees of $p$ and $q$.  Then $f$ has a sums of
squares decomposition:
\[
|p(z)|^2 - |q(z)|^2 = \sum_{j=1}^{n} (1-|z_j|^2)|\vec{F}_j(z)|^2
\] 
where each $\vec{F}_{j}$ is a vector polynomial of total degree less
than or equal to $r-1$.  Every such decomposition must satisfy this
degree bound.
\end{claim}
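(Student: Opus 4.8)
The plan is to first establish existence of the sums of squares decomposition with the stated degree bound, and then show that \emph{any} such decomposition must obey the bound. For existence, I would invoke Agler's theorem (Theorem~\ref{aglerthm}): since $f=q/p$ is Agler class, it has an Agler decomposition $1-f(z)\overline{f(\zeta)}=\sum_{j=1}^n(1-z_j\bar\zeta_j)K_j(z,\zeta)$ with positive semi-definite kernels $K_j$. Multiplying through by $p(z)\overline{p(\zeta)}$ gives
\[
p(z)\overline{p(\zeta)}-q(z)\overline{q(\zeta)}=\sum_{j=1}^n(1-z_j\bar\zeta_j)\,p(z)\overline{p(\zeta)}K_j(z,\zeta).
\]
The left-hand side is a polynomial kernel; the issue is that $p(z)\overline{p(\zeta)}K_j(z,\zeta)$ need not a priori be polynomial. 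Following Cole and Wermer's strategy, one argues that because $f$ is \emph{inner}, the kernels can be taken to be polynomial, essentially because the relevant reproducing kernel Hilbert spaces are finite dimensional and spanned by polynomials of controlled degree; writing $p(z)\overline{p(\zeta)}K_j(z,\zeta)=\vec F_j(z)\cdot\overline{\vec F_j(\zeta)}$ for a vector polynomial $\vec F_j$ then yields the decomposition. The degree bound on $\vec F_j$ should come from a Cayley–Hamilton / Fej\'er–Riesz type argument on the torus: on $\T^n$ we have $|p|^2=|q|^2$, and the decomposition is built from the "model space" data of $f$, whose dimension is controlled by the degree $r$.

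For the necessity of the bound (the "every such decomposition" clause, which is the statement to be proved), suppose we are handed \emph{any} sums of squares decomposition
\[
|p(z)|^2-|q(z)|^2=\sum_{j=1}^n(1-|z_j|^2)|\vec F_j(z)|^2.
\]
I would argue by examining the top-degree homogeneous parts. Let $r$ be the maximum of the total degrees of $p$ and $q$. The left-hand side is a polynomial in $z$ and $\bar z$ whose bidegree (degree in $z$, degree in $\bar z$) is at most $(r,r)$. On the right-hand side, the term $(1-|z_j|^2)|\vec F_j(z)|^2$ contributes a piece of the form $|z_j|^2|\vec F_j(z)|^2$ with bidegree $(\deg\vec F_j+1,\deg\vec F_j+1)$ in the "pure power" part. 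If some $\vec F_j$ had total degree $\geq r$, then $|z_j|^2|\vec F_j(z)|^2$ would have a homogeneous bidegree-$(r+1,r+1)$ component, call it $z_j\bar z_j|\vec G_j(z)|^2$ where $\vec G_j$ is the degree-$(r)$ homogeneous part of $\vec F_j$. For the total sum to have bidegree at most $(r,r)$, these top pieces across $j=1,\dots,n$ must cancel:
\[
\sum_{j=1}^n z_j\bar z_j\,|\vec G_j(z)|^2=0,
\]
and since each summand is individually nonnegative for all $z$ (it is $|z_j\vec G_j(z)|^2\geq 0$), each must vanish identically, forcing $\vec G_j\equiv 0$ for all $j$. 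That contradicts $\vec F_j$ having total degree $\geq r$, so every $\vec F_j$ has total degree $\leq r-1$.

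The main obstacle I anticipate is not the necessity direction (the homogeneous-top-term cancellation argument above is clean) but the \emph{existence} direction, specifically extracting a \emph{polynomial} — rather than merely rational or general analytic — sums of squares representation from Agler's abstract decomposition while simultaneously controlling degrees. The key technical point is that innerness of $f$ makes the Agler model spaces finite dimensional with polynomial generators of degree $< r$; this is exactly the Cole–Wermer mechanism, and it is why the excerpt says "we need some of the details of the proof." Concretely, one realizes each $K_j$ via a reproducing kernel Hilbert space, identifies a spanning set of polynomials for it using that $p\overline{p}-q\overline{q}$ vanishes on $\T^n$ (so division by the stable polynomial $p$ behaves well), and checks the degree count propagates correctly through the multiplication by $p$. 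Once both halves are in place, Claim~\ref{claim2} follows, and combined with the preceding claim it delivers the finite-dimensional transfer function realization with the corresponding dimension bounds.
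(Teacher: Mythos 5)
Your necessity argument has the right idea but is stated in a way that does not quite close. You examine the bidegree-$(r+1,r+1)$ component and claim it is $-\sum_j |z_j|^2|\vec G_j(z)|^2$ with $\vec G_j$ the degree-$r$ homogeneous part of $\vec F_j$. But the summand $(1-|z_j|^2)|\vec F_j(z)|^2$ contributes \emph{two} pieces to that bidegree: $-|z_j|^2|\vec G_j(z)|^2$ and also $+|\vec G_j'(z)|^2$, where $\vec G_j'$ is the degree-$(r+1)$ homogeneous part of $\vec F_j$ (which you are allowing to be nonzero, since you only assumed $\deg \vec F_j \ge r$). The sum $\sum_j\bigl(|\vec G_j'(z)|^2 - |z_j|^2|\vec G_j(z)|^2\bigr)=0$ has no sign and yields no contradiction. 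The fix is to set $D=\max_j \deg \vec F_j$, assume $D\ge r$, and extract the bidegree-$(D+1,D+1)$ component instead; there the $+|\vec F_j|^2$ terms vanish by maximality, leaving $\sum_{j:\deg\vec F_j=D}|z_j|^2|\vec G_j(z)|^2=0$ with each $\vec G_j$ nonzero, and positivity gives the contradiction. With that repair the necessity direction is a clean alternative to the paper's approach.

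The deeper issue is the existence half, which you flag but do not supply. After multiplying the Agler decomposition by $p(z)\overline{p(\zeta)}$, one has a decomposition in terms of Hilbert-space-valued \emph{analytic} functions $\vec F_j$, not a priori polynomials of any finite degree, so the top-degree homogeneous argument cannot be launched: there is no top degree. The paper resolves this by substituting $z=\zeta=t\mu$ with $t\in\D$, $\mu\in\T^n$, integrating out $\mu$ (i.e.\ taking the zeroth Fourier coefficient in $\mu$), and obtaining
\[
\frac{\sum_\alpha |t|^{2|\alpha|}(|p_\alpha|^2-|q_\alpha|^2)}{1-|t|^2}
=\sum_{j}\sum_\alpha |\vec F_{j,\alpha}|^2\,|t|^{2|\alpha|}.
\]
The left side is a polynomial in $|t|^2$ of degree at most $r-1$ (the numerator vanishes at $|t|=1$ because $|p|=|q|$ on $\T^n$), while the right side is a power series in $|t|^2$ with nonnegative coefficients; matching coefficients forces $\vec F_{j,\alpha}=0$ for $|\alpha|\ge r$. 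This single computation simultaneously proves that the Agler-theorem kernels give \emph{polynomial} $\vec F_j$ and delivers the degree bound for \emph{any} sums-of-squares decomposition. In other words, the paper does not prove existence and necessity by two separate mechanisms; the averaging-over-$\mu$ argument is the engine for both, and it is precisely the step your proposal names as the obstacle but leaves out.
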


\begin{proof}
 By Agler's theorem, $f$ has an Agler decomposition:
\begin{equation} \label{aglerdecomp}
1-f(z)\overline{f(\zeta)} = \sum_{j=1}^{n} (1-z_j \bar{\zeta_j})
K_j(z,\zeta)
\end{equation}
where each $K_j$ is a positive semi-definite kernel. 

Since
\[
\begin{aligned}
\frac{1}{\prod_{j=1}^{n}(1-z_j\bar{\zeta_j})} &\geq 
\frac{1-f(z)\overline{f(\zeta)}}{\prod_{j=1}^{n}(1-z_j\bar{\zeta_j})}
\\
& \geq \frac{K_j(z,\zeta)}{\prod_{i \ne j} (1-z_i \bar{\zeta_i})} \\
& \geq K_j(z,\zeta)
\end{aligned}
\]
in the sense of positive semi-definite kernels (i.e. $K \geq L$ means
$K-L$ is positive semi-definite in this situation), it follows from
standard facts about reproducing kernels that each $K_j$ is the
reproducing kernel of a space of analytic functions and that for each
$j$ there is a Hilbert space $\mcH_j$ and an $\mcH_j$ valued analytic
function $\vec{F}_j: \D^n \to \mcH_j$ such that
\[
K_j(z,\zeta)  = \ip{\vec{F}(z)}{\vec{F}(\zeta)}.
\]

(See \cite{CW99} for more on the details of this
argument.)

Let us multiply equation \eqref{aglerdecomp} by
$p(z)\overline{p(\zeta)}$ and absorb this factor into the definition
of $\vec{F}_j(z)$ so that we really have
\[
p(z)\overline{p(\zeta)} - q(z)\overline{q(\zeta)} = \sum_{j=1}^{n}
(1-z_j\bar{\zeta_j}) \ip{\vec{F}_j(z)}{\vec{F}_j(\zeta)}.
\]
Now we let $z = \zeta = t \mu$ where $t \in \D$ and $\mu \in \T^n$:
\begin{equation} \label{tmu0}
\frac{|p(t\mu)|^2 - |q(t\mu)|^2}{1-|t|^2} = 
\sum_{j=1}^{n}|\vec{F}_j(t\mu)|^2
\end{equation}
The left hand side is a polynomial in $t, \bar{t}$ (because
$|p(\mu)|^2 = |q(\mu)|^2$) and a trigonometric polynomial in $\mu$.
Write
\[
\begin{aligned}
p(z) & = \sum_{\alpha} p_\alpha z^{\alpha} \\
q(z) & = \sum_{\alpha} q_{\alpha} z^{\alpha} \\
\vec{F}_j(z) &= \sum_{\alpha} \vec{F}_{j,\alpha} z^{\alpha}
\end{aligned}.
\]
(We are using multi index notation to write polynomials and power
series.)

Since 
\[
|p(t\mu)|^2 = \sum_{\alpha, \beta} p_{\alpha} \bar{p}_{\beta}
\mu^{\alpha -\beta} t^{|\alpha|} \bar{t}^{|\beta|}
\]
(and by performing similar computations for $|q(t\mu)|^2$ and
$|\vec{F}_{j}(t\mu)|^2$), we are able to compute the the zero-th
Fourier coefficient of \eqref{tmu0} when viewed as a Fourier series in
$\mu$:

\begin{equation} \label{tmu}
\frac{\sum_{\alpha} |t|^{2|\alpha|} (|p_{\alpha}|^2 -
  |q_{\alpha}|^2)}{1-|t|^2} = \sum_{j=1}^{n} \sum_{\alpha}
|\vec{F}_{j,\alpha}|^2 |t|^{2|\alpha|}.
\end{equation}

Recall $r$ denotes the maximum of the total degrees of $p$ and $q$.
Now, $|t|^{2}$ does not occur to any power larger than $r-1$ in
\eqref{tmu} and therefore
\[
|\vec{F}_{j,\alpha}|^2 = 0
\]
whenever $|\alpha| \geq r$.
  
This implies each $\vec{F}_j(z)$ is a Hilbert space valued
\emph{polynomial}.  It then follows that $|\vec{F}_j(z)|^2$ can be
replaced with the square of a vector polynomial.  One way to see this
is to observe that the coefficients of $z^\al\bar{z}^{\beta}$ in
$|\vec{F}_{j}(z)|^2$ form a finite dimensional positive semi-definite
matrix $X$, which when factored as $X = Y^*Y$ gives a representation
of $|\vec{F}_{j}(z)|^2$ as a vector polynomial squared.
\end{proof}

These two claims prove Theorem \ref{unsurp}.  To prove the bounds in
Theorem \ref{bounds}, we assume $p,q$ have multidegree at most $d =
(d_1,\dots,d_n)$.  Let $|d| = \sum_{j} d_j$, which is an upper bound
on the total degree of $p$ and $q$.

Consider again:
\[
p(z)\overline{p(\zeta)} - q(z)\overline{q(\zeta)} = \sum_{j=1}^{n}
(1-z_j\bar{\zeta_j}) \ip{\vec{F}_j(z)}{\vec{F}_j(\zeta)}.
\]
where we now know each $\vec{F}_j(z)$ must be a vector polynomial of
total degree at most $|d|-1$.  Let us focus on degree bounds for
$z_1$; our argument applies by symmetry to the other variables.

Let $M$ be a positive integer (which we use to amplify the degree of
$z_1$.) Replacing $z$ and $\zeta$ in the last equation with
$(z_1^M,z_2,\dots, z_n) = (z_1^M, z')$, we have
\begin{align}
&|p(z_1^M,z')|^2 - |q(z_1^M, z')|^2 \nonumber \\
&= (1-|z_1|^2)
(\sum_{j=0}^{M-1}|z_1|^{2j}) |\vec{F}_1(z_1^M,z')|^2
+ \sum_{j=2}^{n}
(1-|z_j|^2) |\vec{F}_j(z_1^M, z')|^2. \label{sosM}
\end{align}

We apply Claim \ref{claim2} to $p(z_1^M, z')$. Since the left hand
side has total degree at most $d_1M+d_2+\dots+d_n = d_1(M-1) + |d|$ in
$(z_1,z')$, the sums of squares polynomials on the right hand side
have total degree at most $d_1(M-1)+|d|-1$.

Suppose $z^\al$ has a nonzero coefficient in the Taylor expansion of
$\vec{F}_1$ and write $\al = (\al_1,\dots, \al_n)$.  Since
$|z_1^{M-1}\vec{F}_1(z_1^M,z')|^2$ appears as a sums of squares term
in \eqref{sosM}, our degree bound from Claim \ref{claim2} says
\[
M-1+M\al_1 + \sum_{j\geq 2} \al_j \leq d_1(M-1) + |d|-1
\]
and letting $M$ go to infinity we get $\al_1 \leq d_1 -1$.

Similarly, suppose $z^\al$ has a nonzero coefficient in the Taylor
expansion of $\vec{F}_j$, $j\ne 1$.  Then, looking at $\vec{F}_j$ in
\eqref{sosM}, our degree bound gives
\[
M\al_1 + \sum_{j\geq 2} \al_j  \leq d_1(M-1) + |d| - 1 .
\]
Letting $M$ go to infinity we get $\al_1 \leq d_1$.

The same argument applies to other variables.  This shows
$\vec{F}_{j}$ has multidegree at most $d - e_j$, with $e_j$ the
multi-index with $1$ in the $j$-th position and zeros elsewhere.

Therefore, $|\vec{F}_{j}(z)|^2$ is a reproducing kernel for a space of
polynomials of dimension at most 
\[
N_j = d_j \prod_{k \ne j}(d_k+1)
\]
and can therefore be written as the square of a vector polynomial with
at most $N_j$ components. (See the appendix of \cite{CW99} for some
background.)

This proves Theorem \ref{bounds}.

\section{Theorem \ref{example}: Three variable example}
The three variable rational inner function on the tridisk
$\mathbb{D}^3$
\[
f(z_1,z_2,z_3) = \frac{3z_1z_2z_3 - z_1z_2-z_2z_3-z_1z_3}{3-z_1-z_2-z_3}
\]
is in the Agler class because we can explicitly write an Agler
decomposition.

Namely, let
\[
S(z,w) = |P_1(z,w)|^2 + |P_2(z,w)|^2 + |P_3(z,w)|^2
\]
where
\[
\begin{aligned}
P_1(z,w) &= \sqrt{3}(zw-z/2-w/2) \\
P_2(z,w) &= \sqrt{3}(1-z/2-w/2) \\
P_3(z,w) &= (1/\sqrt{2})(z-w)
\end{aligned}
\]


Then, a decomposition for $f$ is given by
\[
\begin{aligned}
&|3-z_1-z_2-z_3|^2 - |3z_1z_2z_3 - z_1z_2-z_2z_3-z_1z_3|^2 \\
&= (1-|z_1|^2)S(z_2,z_3) + (1-|z_2|^2) S(z_1,z_3) + (1-|z_3|^2)
S(z_1,z_2).
\end{aligned}
\]

It remains to show that none of the sums of squares terms can be
chosen to be a single square.  So, suppose we have a decomposition
\[
\begin{aligned}
&|3-z_1-z_2-z_3|^2 - |3z_1z_2z_3 - z_1z_2-z_2z_3-z_1z_3|^2 \\
&= (1-|z_1|^2)SOS_1(z_2,z_3) + (1-|z_2|^2) SOS_2(z_1,z_3) + (1-|z_3|^2)
SOS_3(z_1,z_2).
\end{aligned}
\]
Each $SOS_j$ is a sum of squared moduli of polynomials. Note that by
Theorem \ref{bounds}, the squared polynomials in $SOS_1$ must have
multi-degree bounded by $(0,1,1)$ (with similar bounds for the other
sums of squares terms).

Setting $|z_2|=|z_3|=1$ yields
\[
|3-z_1-z_2-z_3|^2 - |3z_1z_2z_3 - z_1z_2-z_2z_3-z_1z_3|^2 =
(1-|z_1|^2)SOS_1(z_2,z_3)
\]
and $SOS_1(z_2,z_3)$ can be solved for explicitly when $z_2,z_3 \in
\T$.  Indeed, this term has to agree with $S(z_2,z_3)$ when $z_2,z_3
\in \T$:
\[
SOS_1(z,w) = 10 - 6\text{Re}(z+w) + 2\text{Re}(z\bar{w})
\]

We must show this is not a single square of a polynomial of degree
$(1,1)$ (on $\T^2$).  Supposing otherwise, we equate such an
expression
\[
\begin{aligned}
|a+bz+cw+dzw|^2 &= |a|^2+|b|^2+|c|^2+|d|^2 + \\
&+ 2\text{Re}\bar{a}(bz+cw+dzw)\\
& + 2\text{Re}(\bar{b}c\bar{z}w + \bar{b}dw + \bar{c}dz)
\end{aligned}
\]
with $SOS_1(z,w)$ and get the following by matching Fourier
coefficients
\begin{align}
10 &= |a|^2+|b|^2+|c|^2+|d|^2 \label{sos1}\\
-3 &= \bar{a}b + \bar{c}d  \label{sos2} \\
-3 &= \bar{a}c + \bar{b}d \label{sos3} \\
1 &= b\bar{c} \label{sos4} \\
0 &= \bar{a}d. \label{sos5}
\end{align}

These equations cannot all hold.  One of $a$ or $d$ equals zero by
\eqref{sos5} (but not both by \eqref{sos2}).  If $d = 0$, then $b=c
\in \T$ (by \eqref{sos2}, \eqref{sos3}, and \eqref{sos4}), and so
$\sqrt{8} = |a|$ (by \eqref{sos1}) contradicting equation
\eqref{sos2}:
\[
-3 = \bar{a} b.
\]
The case $a=0$ works the same.

Since the sums of squares terms must equal at least two squares, a
transfer function realization of $f$ has size at least $3*2 = 6$.  Our
explicit Agler decomposition shows $f$ has a realization of size
$3*3 = 9$.

\section*{Acknowledgments} 
I thank Joseph Ball and John M\mcc Carthy for their feedback on this
paper.

\bibliography{rifitsac}

\end{document}